\newtheorem{theorem}{Theorem}
\newtheorem{definition}{Definition}
\newtheorem{proposition}{Proposition}
\newtheorem{remark}{Remark}
\theoremstyle{definition}
\newtheorem{example}{Example}
\newcommand{\wt}[1]{\widetilde{#1}}
\newcommand{\K}{\mathcal{K}}
\newcommand{\cL}{\mathcal{L}}
\newcommand{\KL}{\mathcal{KL}}
\newcommand{\R}{\mathbb{R}}
\newcommand{\N}{\mathbb{N}}
\newcommand{\eps}{\varepsilon}
\newcommand{\sat}{\mathfrak{sat}}
\newcommand{\dxi}{\frac{\mathrm{d}}{\mathrm{d}\xi}}
\newcommand{\e}{\mathrm{e}}
\newcommand{ \satr}{\mathrm{sat}_\R}
\begin{document}

\title[Remarks on ISS for saturated systems]{Remarks on input-to-state stability 
of collocated systems with saturated feedback}

\author{Birgit Jacob}        

\address[BJ, LV]{University of Wuppertal, School of Mathematics and Natural Sciences, Gau\ss stra\ss e 20, D-42119 Wuppertal, Germany.}
 \email{bjacob@uni-wuppertal.de, vorberg@uni-wuppertal.de}   
\author{Felix L.~Schwenninger}
\address[FLS]{Department of Applied Mathematics, University of Twente, P.O.~Box 217, 
7500 AE Enschede, 
The Netherlands and \newline Department of Mathematics, Center for Optimization and Approximation, University of Hamburg, 
Bundesstr. 55, 20146 Hamburg, Germany}
\email{f.l.schwenninger@utwente.nl}
\author{Lukas A.~Vorberg}



\date{August 2020}
\keywords{Input-to-state stability \and Saturation \and Collocated system \and Semilinear system \and Infinite-dimensional system}

\begin{abstract}
{We investigate input-\-to-\-state stability (ISS) of infinite-\-dimensional collocated  control systems subject to saturated feedback. Here, the unsaturated closed loop is dissipative and uniformly globally asymptotically stable. Under an additional assumption on the linear system, we show ISS for the saturated one. We discuss the sharpness of the conditions in light of existing results in the literature. }
\end{abstract}
\maketitle

\section{Introduction}
\label{intro}
In this note we continue the study of the stability of  systems of the form 
\begin{align*}
\begin{cases}
\dot{x}(t) = Ax(t)-B\sigma\big(B^*x(t)+d(t)\big), \\
x(0) = x_0,
\end{cases}
\tag{$\Sigma_{SLD}$}
\label{System SLD}
\end{align*} 
derived from the linear collocated open-loop system
\begin{align*}
\dot{x}(t) &= Ax(t) +B u(t), \\
y(t) &= B^* x(t).
\end{align*}
by the nonlinear feedback law $u(t)=-\sigma(y(t)+d(t))$. 
Here $X$ and $U$ are Hilbert spaces, $A:D(A)\subset X\rightarrow X$ is the generator of a  strongly continuous contraction semigroup and $B$ is a bounded linear operator from $U$ to $X$, i.e. $B\in\mathcal{L}(U,X)$.
The function $\sigma:U\rightarrow U$ is locally Lipschitz continuous and maximal monotone 
with $\sigma(0)=0$. Of particular interest is the case in which $\sigma$ is even linear in a neighbourhood of $0$. The open-loop system is called collocated as the output operator $B^*$ equals the adjoint  of the input operator $B$.
In the following we are interested in stability with respect to both the initial value $x_{0}$, that is {\it internal stability}, and the disturbance $d$; {\it external stability}. This is combined in the notion of {\em input-to-state stability} (ISS), which has recently been studied for infinite-dimensional systems e.g.\ in \cite{GuLoOp19,JNPS18,mironchenkowirth18a,mironchenkowirth18b} and particularly for semilinear systems in  \cite{Gr98,GuLo18,Schw20}, see also \cite{MiroPrie19} for a survey. The effect of feedback laws acting (approximately) linearly only locally is known in the literature as {\it saturation}, and first appeared in \cite{MR997216,MR1808404} in the context of stabilization of infinite-dimensional linear systems, see also \cite{article4}. There, internal stability of the closed-loop system was studied using nonlinear semigroup theory, a natural tool to establish existence and uniqueness of solutions for equations of the above type, see also the more recent works \cite{MaAnPr17,MaChPr18,MaChPr19}. The simultaneous study of internal stability and the robustness with respect to additive disturbances in the saturation seems to be rather recent. This notion clearly includes uniform global (internal) stability, which is far from being trivial for such nonlinear systems. In \cite{7386612} this was studied for a wave equation and in \cite{marx:hal-01367622} Korteweg-de Vries type equation was rigorously discussed, building on preliminary works in \cite{marx:hal-01193019,marx:hal-01360576}, see also \cite{MaAnPr17}. \\
 The combination of saturation and ISS was initiated in \cite{MaChPr18} and,  as for internal stability, complemented in \cite{MaChPr19}.  For the rich finite-dimensional theory on ISS for related semilinear systems, we refer e.g.\ to \cite{Gr98,GuLo18} and the references therein. For (infinite-dimensional) nonlinear systems, ISS is typically assessed by Lyapunov functions, see e.g.\ \cite{DaM13,JMPW20,Mi16,mironchenkowirth18b,Schw20}. These are often constructed by energy-based $L^{2}$ norms, but also Banach space methods exist \cite{mironchenkowirth18b}, which are much easier to handle in the sense of $L^{\infty}$-estimates as present in ISS. We will use some of these constructions here. 

In this note we investigate the question whether internal stability of the linear undisturbed system, that is, \eqref{System SLD} with $\sigma(u)=u$ and $d\equiv 0$, implies input-to-state stability of \eqref{System SLD}. In doing so we try to shed light on limitations of existing results. Because the linear system has a bounded input operator, the above question is equivalent to asking whether ISS of the linear system yields that  \eqref{System SLD} is ISS, see e.g.\ \cite{JNPS18}.
 For nonlinear systems, uniform global asymptotic (internal) stability is only a necessary condition for ISS, which, however, may fail in presence of saturation. 
Indeed, the following saturated transport equation will serve as a model for a counterexample which we shall discuss in this note in detail, see Theorem \ref{theo:sat}.
\begin{align*}\label{System sat}
\begin{cases}
\dot{x}(t,\xi) = \dxi x(t,\xi)-\mathrm{sat}_\R\big(x(t,\xi)\big),\quad (t,\xi)\in(0,\infty)\times [0,1], \\
x(t,0)= x(t,1),\\
x(0,\xi) = f(\xi),
\end{cases}
\tag{$\Sigma_\sat$}
\end{align*}
where 
\begin{align}\label{eq:sat}
{\mathrm{sat}_\R}(z) \coloneqq \begin{cases} \frac{z}{|z|}, \quad &|z|\geq1 \\
z, \quad &z\in(-1,1).
\end{cases}
\end{align}

\section{ISS for saturated systems}
\label{sec:1}

\begin{definition}\label{def:admfeedback}
We call $\sigma:U\rightarrow U$ an {\em admissible feedback function} if 
\begin{enumerate}
\item $\sigma(0) = 0$,
\item $\sigma$ is \emph{locally Lipschitz continuous}, i.e. for every $r>0$ there exists a $k_r>0$ such that
\begin{align*}
\|\sigma(u)-\sigma(v)\|_U \leq k_r\|u-v\|_U  \quad \forall \ u,v \in U \text{ with } \|u\|_U,\|v\|_U\leq r,
\end{align*}
\item $\sigma$ is {\em maximal monotone}, i.e.\ $\Re\langle \sigma(u)-\sigma(v),u-v\rangle_U \geq 0 \quad \forall \ u,v\in U$.
\end{enumerate}
If additionally a Banach space $S$ is  continuously, densely embedded in $U$ with dual space $S'$ such that
\begin{enumerate}
\setcounter{enumi}{3}
\item $\|\sigma(u)-u\|_{S'}\leq \Re\langle\sigma(u),u\rangle_U \quad \forall \ u\in U$, and
\item there exists $C_0>0$ such that
\begin{align*}
\Re\langle u,\sigma(u+v)-\sigma(u)\rangle_U \leq C_0\|v\|_U \quad \forall \ u,v\in U,
\end{align*} 
\end{enumerate}
then we call $\sigma$ a {\em saturation function}. Here $U\subset S'$ is understood in the sense of rigged Hilbert spaces, i.e.\ an element $u$ in $U$ is identified with the functional $s\mapsto \langle s,u\rangle_U$  in $S'$. 
\end{definition}

It seems that the notion of a saturation function appeared first in the context of infinite-dimensional systems in \cite{MR1808404,MR997216}. Note that the precise definition --- in particular which properties it should include --- has varied in the literature since then. 
Our definition here matches the one in \cite{MaChPr18}, except for the fact that there, in addition, it is required that $\|\sigma(u)\|_{S}\leq1$.
We distinguish between ``admissible feedback functions'' and ``saturation functions'' in order to point out which (minimal) assumptions are needed in the following results.

\begin{example}\label{Exmp sat}
Let $\mathrm{sat}_\R$ be the function from \eqref{eq:sat}. It is easy to see that the function 
\begin{align*}
\sat :L^2(0,1)\rightarrow L^2(0,1), \quad u\mapsto \mathrm{sat}_\R(u(\cdot))
\end{align*}
is an admissible feedback function. Moreover, for $S=L^\infty(0,1)$  we have that
\begin{align*}
\|\sat(u) - u\|_{L^1(0,1)} &= \int_0^1|\sat(u)(\xi)-u(\xi)| \, \mathrm{d}\xi \\
&\leq \int_{\{u\geq1\}}u(\xi) \, \mathrm{d}\xi + \int_{\{-1\leq u\leq 1\}}u^2(\xi) \, \mathrm{d}\xi+\int_{\{u\leq-1\}}-u(\xi) \, \mathrm{d}\xi \\
&=\langle\sat(u),u\rangle_U \qquad \forall u\in U.
\end{align*}
As Property (v) from Definition \ref{def:admfeedback} follows similarly, 
 $\sat$ is a saturation function. Note that this example is well-known in the literature, see \cite{MaChPr18,MaChPr19} and the references therein.
\end{example}

Let $\sigma$ be an admissible feedback function. In the rest of the paper we will be interested in the following two types of systems:
The {\em unsaturated system},
\begin{align*}
\begin{cases}
\dot{x}(t) = Ax(t) -BB^*x(t), \\
x(0) = x_0,
\end{cases}
\tag{$\Sigma_L$}
\label{System L}
\end{align*}
and the {\em disturbed saturated system}
\begin{align}\label{SLD}
\begin{cases}
\dot{x}(t) = Ax(t)-B\sigma\big(B^*x(t)+d(t)\big), \\
x(0) = x_0.
\end{cases}
\tag{$\Sigma_{SLD}$}
\end{align} 
 with $d\in L^\infty(0,\infty;U)$. We abbreviate $$\wt{A}:D(\wt{A})\subset X\rightarrow X, \quad\wt{A}x:= Ax-BB^* x.$$
By the Lumer--Phillips theorem,  $\wt{A}$ generates a strongly continuous semigroup of contractions $(\wt{T}(t))_{t\geq 0}$ 
as $-BB^*\in L(X)$ is dissipative.
Moreover, the nonlinear operator $A-B\sigma(B^{*}\cdot)$ generates a nonlinear semigroup of contractions \cite[Thm.~1]{Webb} since, obviously, $B\sigma(B^{*}\cdot):X\to X$ is continuous and monotone, i.e.\ 
\[\langle B\sigma(B^{*}x)-B\sigma(B^{*}y),x-y\rangle \ge 0, \qquad \forall x,y\in X.\]
Clearly, \eqref{System L} is a special case of \eqref{System SLD} with $d=0$, as $\sigma(u)=u$ is an admissible feedback function. 

\begin{definition}
Let $x_{0}\in X$, $d\in L_{\mathrm{loc}}^\infty(0,\infty;U)$ and $t_{1}>0$. A continuous function $x:[0,t_{1}]\to X$ satisfying 
\begin{align*}
x(t) = T(t)x_0-\int_0^tT(t-s)B\sigma\big(B^*x(s)+d(s)\big)\,\mathrm{d}s,\quad t\in [0,t_{1}],
\end{align*}
is called a \emph{mild solution} of \eqref{SLD} on $[0,t_{1}]$ and we may omit the reference to the interval. If $x:[0,\infty)\to X$ is such that the restriction $x|_{[0,t_{1}]}$ is a mild solution for every $t_{1}>0$, then $x$ is called a \emph{global mild solution}.
\end{definition}

By our assumptions,  \eqref{System SLD} has a unique mild solution (on some maximal interval) for any $x_{0}\in X$  and $d\in L^\infty(0,\infty;U)$, \cite[Thm.~6.1.4]{pazy83}\footnote{A careful look at the proof reveals that the continuity of the nonlinearity in $t$ required in \cite[Thm.~6.1.2]{pazy83} can be dropped in our setting.}.
In order to introduce the external stability notions,  the following well-known comparison functions are needed,
\begin{align*}
\K &\coloneqq \{\alpha\in C(\R_+,\R_+) \ | \ \alpha \text{ is strictly increasing}, \alpha(0)=0 \},\\
\K_\infty &\coloneqq \{ \alpha\in\K \ | \ \alpha \text{ is unbounded}\}, \\
\cL &\coloneqq \{\alpha\in C(\R_+,\R_+) \ | \ \alpha \text{ is strictly decreasing with } \lim_{t\rightarrow\infty}\alpha(t)=0\}, \\
\KL &\coloneqq \{ \beta\in C(\R_+\times \R_+,\R_+) \ | \ \beta(\cdot,t)\in\K \ \ \forall t>0, \ \beta(r,\cdot)\in\cL \ \ \forall r>0\},
\end{align*}
where $C(\R_+,\R_{+})$ refers to the continuous functions from $\R_{+}$ to $\R_{+}$.

\begin{definition}
\begin{enumerate}
\item \eqref{System SLD} is called  
{\em{globally asymptotically stable} (GAS)}
if every mild solution $x$ for $d=0$ is global and the following two properties hold;
 $\lim_{t\rightarrow \infty}\|x(t)\|_X=0$ for every initial condition $x_{0}\in X$ and 
there exist $\sigma\in \mathcal{K}_{\infty}$ and $r>0$  such that  $\|x(t)\|\leq\sigma(\|x_{0}\|)$ for every  $x_{0}\in X$ with $\|x_0\|\leq r$, $d=0$ and  $t\ge 0$.
\item \eqref{System SLD} is called \emph{semi-globally exponentially stable in $D(A)$} if for $d=0$ and any $r>0$ there exist $\mu(r)>0$ and $K(r)>0$ such that any mild solution $x$ with initial value $x_0\in D(A)$ is global and satisfies
\begin{align*}
\|x(t)\|_X \leq K(r)\e^{-\mu(r)t}\|x_0\|_X \qquad \forall t\geq0
\end{align*}
 for $\|x_0\|_{D(A)} \coloneqq \|x_0\|_X + \|Ax_0\|_X \leq r$.
\item \eqref{System SLD} is called {\em locally input-to-state stable (LISS)}  if there exist $r>0$, $\beta\in\KL$ and $\rho\in\K_\infty$ such that every mild solution $x$ with initial value satisfying $\|x_0\|_X\leq r$ and disturbance $d$ with $\|d\|_{L^{\infty}(0,\infty;U)}\leq r$ is global and for 
all $t\geq0$ we have that
\begin{align}\label{ISS}
\|x(t)\|_X \leq \beta(\|x_0\|_X,t)+\rho(\|d\|_{L^\infty(0,t;U)}).
\end{align}
\eqref{System SLD} is called {\em input-to-state stable (ISS)} if $r=\infty$.\\
System \eqref{System SLD} is called LISS with respect to $C(0,\infty;U)$ if the above holds for continuous disturbances only.
If \eqref{ISS} holds for \eqref{System SLD} with $d\equiv0$ and $r=\infty$, the system is called {\em uniformly globally asymptotically stable (UGAS)}, where the uniformity is with respect to the initial values.
\end{enumerate}
\end{definition}
Note that in our notation "UGAS" refers to "0-UGAS" and "GAS" refers to "0-GAS" more commonly used in the literature.
The System   \eqref{System SLD} is globally asymptotically stable if and only if  for every mild solution $x$ for $d=0$ we have $\lim_{t\rightarrow \infty}\|x(t)\|_X=0$. 
This directly follows from the fact that the mild solutions of \eqref{System SLD} with $d=0$ can be represented by a (nonlinear) contraction semigroup, which implies that $\|x(t)\|\leq \|x_{0}\|$ for all $t\ge0$, $x_{0}\in X$.
Compared to the other notions, semi-global exponential stability in $D(A)$ seems to be less common in the literature, but appeared already in the context of saturated systems in \cite{MaChPr19}. The notion of semi-global exponential stability in $X$ was studied in \cite{marx:hal-01367622}.
Note that for the linear System \eqref{System L} UGAS is equivalent to the existence of constants $M,\omega>0$ such that $\|\wt{T}(t)\|_X\leq M\e^{-\omega t}$ for all $t\geq0$, see \cite[Proposition V.1.2]{engelnagel99}. Clearly,  if  \eqref{System SLD} is UGAS, then it is globally asymptotically stable.
We note that semi-global exponential stability in $D(A)$ implies global asymptotical stability since $D(A)$ is dense in $X$ and by the above mentioned fact that the mild solutions are described by a nonlinear contraction semigroup.
Moreover, using again the denseness of $D(A)$ in $X$, the System \eqref{System L} is UGAS if and only if it is semi-globally exponentially stable in $D(A)$.  

Next we investigate the question whether  (semi-)global exponential stability in $D(A)$ or UGAS of System  \eqref{System L} implies (semi-)global exponential stability in $D(A)$ or UGAS of System \eqref{System SLD}.

In  \cite[Theorem 2]{MaAnPr17} it is shown that  global asymptotic stability of \eqref{System L} implies global asymptotic stability of \eqref{System SLD} if
\begin{itemize}
\item $D(A)$ equipped with the norm $\|\cdot\|_{D(A)}= \|\cdot\|_X + \|A\cdot\|_X$ is a Banach space  compactly embedded in $X$ and
\item $\sigma$ is an admissible feedback function with the additional properties that for all $u\in U$, $\Re\langle u,\sigma(u)\rangle =0$ implies $u=0$.
\end{itemize}

Note that the other assumptions of \cite[Theorem 2]{MaAnPr17} are satisfied in our situation if $\sigma$ is globally Lipschitz; this follows again by the fact that the mild solutions are represented by a nonlinear semigroup.
In \cite[Section V]{mironchenkowirth18a} it is shown, that under these conditions and in finite dimensions, i.e. $X=\R^n$ and $U=\R^m$, \eqref{System SLD} is UGAS.

Here we are interested in results for general admissible feedback functions and saturation functions. The following result was proved in \cite{MaChPr19} and \cite{MaChPr18}. 

\begin{proposition}[{{\cite[Theorem 1]{MaChPr18}}, \cite[Theorem 2]{MaChPr19}}]\label{Thm SGES}
Let \eqref{System L} be UGAS and $\sigma:U\to U$ be a globally Lipschitz saturation function. 
\begin{enumerate}
\item If $S=U$, then \eqref{System SLD} is ISS. 
\item\label{prop1ii} If there exists a bounded self-adjoint operator $P$ which maps $D(A)$ to $D(A)$ and solves 
\begin{equation}\label{prop1eq1}
\langle \tilde{A}x,Px\rangle+\langle Px,\tilde{A}x\rangle\leq -\langle x,x\rangle,\qquad \forall x\in D(\tilde{A})=D(A),
\end{equation}
and if 
\begin{align}\label{30}
\exists c>0 \, \forall x\in D(A):\quad \|B^*x\|_S \leq c\|x\|_{D(A)},
\end{align}
then \eqref{System SLD} is semi-globally exponentially stable in $D(A)$.
\end{enumerate}
\end{proposition}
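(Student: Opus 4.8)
The common engine is that, for the linear system, UGAS of \eqref{System L} is equivalent to exponential stability of the contraction semigroup $(\wt{T}(t))_{t\geq0}$ generated by $\wt{A}=A-BB^*$, i.e.\ $\|\wt{T}(t)\|\le M\e^{-\omega t}$ for some $M,\omega>0$. I would rewrite \eqref{SLD} as a perturbation of this stable linear flow,
\[\dot{x}(t)=\wt{A}x(t)+B\big(B^*x(t)-\sigma(B^*x(t)+d(t))\big),\]
so that the nonlinearity enters only through the defect $v\mapsto v-\sigma(v)$, which by Property (iv) is controlled in $S'$ by the dissipation, $\|\sigma(v)-v\|_{S'}\le\Re\langle\sigma(v),v\rangle_U$. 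The second ingredient is the energy balance: for $x_0\in D(A)$ the mild solution is a strong solution (Lipschitz perturbation of a generator), and since $\Re\langle Ax,x\rangle\le0$,
\[\dt\|x(t)\|_X^2\le-2\Re\langle\sigma(B^*x(t)+d(t)),B^*x(t)\rangle_U,\]
which extends to general $x_0\in X$ by density. Property (v) together with global Lipschitz continuity lets me replace $\sigma(\,\cdot+d)$ by $\sigma(\,\cdot\,)$ up to terms linear in $\|d\|_U$.

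For (ii) I would take $d=0$, $x_0\in D(A)$ and use $V(x)=\langle Px,x\rangle$, noting first that $P\ge0$: integrating \eqref{prop1eq1} along $(\wt{T}(t))_{t\geq0}$ for $x\in D(A)$ and using exponential stability gives $\langle Px,x\rangle\ge\int_0^\infty\|\wt{T}(t)x\|^2\,dt\ge0$, whence $P\ge0$ by density. Differentiating $V$ along the strong solution and inserting \eqref{prop1eq1} yields
\[\dt V\le-\|x\|_X^2+2\Re\langle B^*Px,\,B^*x-\sigma(B^*x)\rangle_U.\]
The cross term I estimate through the rigged-space duality $U\subset S'$: by Property (iv), $|\Re\langle B^*Px,B^*x-\sigma(B^*x)\rangle|\le\|B^*Px\|_S\,\Re\langle\sigma(B^*x),B^*x\rangle_U$, and by \eqref{30} with $P\colon D(A)\to D(A)$ (bounded by the closed graph theorem) one has $\|B^*Px\|_S\le c\,\|P\|_{\cL(D(A))}\|x\|_{D(A)}$. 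The decisive point is an a priori bound: because the solutions form a semigroup of contractions, $t\mapsto\|x(t+h)-x(t)\|_X$ is non-increasing, so $\|\dt x(t)\|_X\le\|Ax_0-B\sigma(B^*x_0)\|_X$ and hence $\|x(t)\|_{D(A)}\le\wt{C}(r)$ whenever $\|x_0\|_{D(A)}\le r$; thus $\|B^*Px(t)\|_S\le c(r)$ uniformly in $t$. Combining $\dt V$ with $\gamma$ times the energy balance, the function $W=V+\gamma\|x\|_X^2$ with $\gamma=c(r)$ cancels the indefinite term $2c(r)\Re\langle\sigma(B^*x),B^*x\rangle_U$ (which is $\ge0$ by monotonicity), giving $\dt W\le-\|x\|_X^2$. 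Since $c(r)\|x\|_X^2\le W\le(\|P\|+c(r))\|x\|_X^2$, this reads $\dt W\le-(\|P\|+c(r))^{-1}W$, and Gronwall yields the asserted semi-global exponential estimate with $K(r),\mu(r)$ depending on $r$.

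For (i) I have $S=U$, so $S'=U$ and \eqref{30} is automatic from $B\in\cL(U,X)$; here I would use the variation-of-constants formula
\[x(t)=\wt{T}(t)x_0+\int_0^t\wt{T}(t-s)B\big(B^*x(s)-\sigma(B^*x(s)+d(s))\big)\,ds.\]
By Property (iv) with $S=U$ and Lipschitz continuity, the defect satisfies $\|B^*x(s)-\sigma(B^*x(s)+d(s))\|_U\le p(s)+k\|d(s)\|_U$ with $p(s):=\Re\langle\sigma(B^*x(s)),B^*x(s)\rangle_U\ge0$. Using $\|\wt{T}(t-s)\|\le M\e^{-\omega(t-s)}$, the disturbance part integrates to the gain $\tfrac{M\|B\|k}{\omega}\|d\|_{L^\infty}$, and it remains to control $\int_0^t\e^{-\omega(t-s)}p(s)\,ds$. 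For $d=0$ the energy balance gives $\int_0^\infty p\le\tfrac12\|x_0\|_X^2$; splitting the convolution at $t/2$ bounds it by $\tfrac12\e^{-\omega t/2}\|x_0\|_X^2+\int_{t/2}^\infty p$, both of which tend to $0$, producing a $\KL$-term $\beta(\|x_0\|_X,t)$, while for $d\neq0$ I would feed the pointwise bound on $p$ coming from Property (v) into the same convolution.

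I expect the global closure in (i) to be the main obstacle. Unlike (ii), there is no $D(A)$-ball to supply an a priori bound, so the damping $p(s)$, which only ``sees'' $B^*x$ rather than the whole state, must be converted into decay of $\|x\|_X$ using solely the exponential stability of $(\wt{T}(t))_{t\geq0}$. Moreover, once a disturbance is present the natural estimate for $p(s)$ passes through $-\tfrac12\dt\|x\|_X^2$, and integrating this against the kernel reintroduces $\int_0^t\e^{-\omega(t-s)}\|x(s)\|_X^2\,ds$, leaving a genuinely nonlinear integral inequality; and even for $d=0$ the tail $\int_{t/2}^\infty p$ must be made a $\KL$-function of $(\|x_0\|_X,t)$ uniformly, i.e.\ one needs equi-integrability rather than mere attractivity. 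Turning the $S=U$ defect bound into such uniform, disturbance-robust decay is where the real work lies.
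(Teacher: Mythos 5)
This proposition is not proved in the paper at all: it is quoted from \cite[Theorem 1]{MaChPr18} and \cite[Theorem 2]{MaChPr19}, so your attempt can only be judged on its own merits and against those references. Your treatment of part (ii) is essentially the argument of the cited literature and is sound: positivity of $P$ (by integrating \eqref{prop1eq1} along $(\wt{T}(t))_{t\geq0}$), the a priori bound $\|x(t)\|_{D(A)}\leq \wt{C}(r)$ obtained from contractivity of the nonlinear semigroup (monotonicity of $t\mapsto\|x(t+h)-x(t)\|_X$, then $\|Ax(t)\|\leq\|\dot{x}(t)\|+\|B\|k\|B^*\|\,\|x(t)\|$), the $S$--$S'$ duality estimate combining Property (iv) with \eqref{30} and the $D(A)$-invariance of $P$, and finally $W=\langle Px,x\rangle+c(r)\|x\|_X^2$ with Gronwall. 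Modulo the standard regularity facts you invoke (strong solutions and invariance of $D(A)$ for $x_0\in D(A)$ under a globally Lipschitz perturbation), part (ii) is complete.

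Part (i), however, is not proved, as you yourself concede, and the gap is not a removable technicality. For $d=0$ the only bound available for the tail of your convolution is $\int_{t/2}^{\infty}p(s)\,\mathrm{d}s\leq\tfrac12\|x(t/2)\|_X^2$ (energy balance restarted at $t/2$), and contractivity only gives $\|x(t/2)\|_X\leq\|x_0\|_X$, so the tail is bounded but does \emph{not} decay in $t$; upgrading it to a bound $\beta(\|x_0\|_X,t)$ with $\beta\in\KL$, uniformly over a ball of initial data, is exactly the UGAS statement you are trying to prove --- the argument is circular. (Iterating the resulting inequality $\|x(t)\|\leq\beta_0(\|x_0\|,t)+C\|x(t/2)\|^2$ is conceivable but you do not carry it out, and the quadratic compounding is delicate.) For $d\neq0$ it is worse: Property (v) only yields $\int_0^t p(s)\,\mathrm{d}s\leq\tfrac12\|x_0\|_X^2+C_0\,t\,\|d\|_{L^\infty}$, whose linear growth in $t$ renders the convolution estimate useless. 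This distinction between ``damping integrable along each trajectory, hence trajectories decay'' and ``uniform $\KL$ decay'' is precisely the GAS-versus-UGAS gap around which this paper's Proposition \ref{Thm counterexmp} and Theorem \ref{theo:sat} are built (there $S\neq U$, so no contradiction with (i), but they show that this passage admits no soft argument). The cited references establish (i) by a Lyapunov argument in the spirit of your part (ii) --- a quadratic form for $\wt{A}$ corrected by a multiple of the energy $\|x\|_X^2$, with Properties (iv) and (v) used to absorb the defect and the disturbance --- not by variation of constants; completing your proof would require switching to that route.
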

Note that in the second part of Proposition \ref{Thm SGES}, the existence of a bounded, self-adjoint operator $P$ satisfying \eqref{prop1eq1} always follows from the assumption that  \eqref{System L} is UGAS. However, the property that such $P$ leaves $D(\tilde{A})$ invariant does not hold in general. For instance, this is satisfied if there exists $\alpha>0$ such that $\Re\langle Ax,x\rangle \leq -\alpha \|x\|^{2}$  all $x\in D(A)$, which follows directly from dissipativity. On the other hand, it is not hard to construct examples where this invariance is not satisfied. We will comment on this condition also in Remark \ref{rem:thm} ii).
We will show next that Proposition \ref{Thm SGES} ii) does not hold without assuming \eqref{30} and moreover, that \eqref{30} does neither imply UGAS nor ISS for \eqref{System SLD}.

\begin{proposition}\label{Thm counterexmp}
Let $X=U=L^2(0,1)$, $S=L^\infty(0,1)$, $A=0$, $B=I$ and $\sigma=\sat$. Then System  \eqref{System L} is UGAS and System \eqref{System SLD} is neither semi-globally exponentially stable in $D(A)$, nor UGAS nor ISS.
\end{proposition}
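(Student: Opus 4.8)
My plan is to reduce all three negative assertions to a single concentration phenomenon, realised by a family of increasingly spiky initial data of fixed $X$-norm. \emph{Reductions:} since $A=0$ and $B=B^*=I$ we have $\wt A=-BB^*=-I$, so $\wt T(t)=\e^{-t}I$ and \eqref{System L} is exponentially stable, hence UGAS by \cite[Proposition V.1.2]{engelnagel99}. Moreover $D(A)=X$ and $\|\cdot\|_{D(A)}=\|\cdot\|_X$. For $d\equiv0$ the mild solution of \eqref{System SLD} satisfies $x(t)=x_0-\int_0^t\sat(x(s))\,\mathrm ds$, and since $\sat$ acts pointwise this is, for a.e.\ $\xi$, the scalar problem $\dot x(t,\xi)=-\satr(x(t,\xi))$. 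Because ISS with $r=\infty$ implies UGAS (put $d\equiv0$ and use $\rho(0)=0$), it is enough to refute UGAS and semi-global exponential stability in $D(A)$; the failure of ISS then follows.

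\emph{The construction.} For $M>1$ set $x_0^M\coloneqq M\,\mathbf 1_{[0,1/M^2]}$, so that $\|x_0^M\|_X=1$ for all $M$. Solving the scalar equation pointwise --- on $[0,1/M^2]$ the value starts at $M>1$ and decreases with slope $-1$ as long as it exceeds $1$ --- yields $x^M(t,\cdot)=(M-t)\,\mathbf 1_{[0,1/M^2]}$ for $t\in[0,M-1]$, whence $\|x^M(t)\|_X=1-t/M$ on that interval. I would verify that this pointwise formula genuinely defines the $X$-valued mild solution (continuity of $t\mapsto x^M(t)$ into $L^2(0,1)$ and the integral identity), which is routine given that $\satr$ is globally Lipschitz and that uniqueness holds by \cite[Thm.~6.1.4]{pazy83}.

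\emph{The contradictions.} Both use the same mechanism: fix $t\geq0$, take $M>t+1$, and let $M\to\infty$, so that $\|x^M(t)\|_X=1-t/M\to1$ while $\|x_0^M\|_X=1$ throughout. If \eqref{System SLD} were UGAS with $\beta\in\KL$, then $1-t/M\leq\beta(1,t)$ would force $\beta(1,t)\geq1$ for every $t$, contradicting $\beta(1,\cdot)\in\cL$. If it were semi-globally exponentially stable in $D(A)$, then for $r=1$ and the corresponding $K(1),\mu(1)$ the bound $1-t/M\leq K(1)\e^{-\mu(1)t}$ would force $K(1)\e^{-\mu(1)t}\geq1$ for all $t$, contradicting $\e^{-\mu(1)t}\to0$.

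The only subtle point --- which I would emphasise as the conceptual heart --- is that concentration is essential: constant data cannot serve as a counterexample. Indeed, for $x_0\equiv c$ with $c\leq r$ one has $\|x(t)\|_X=c-t$ during the linear phase, and since $\e^{-s}\geq 1-s$ the choice $K(r)=1$, $\mu(r)=1/r$ already gives $c-t\leq c\,\e^{-t/r}$. Thus the counterexample genuinely needs data of bounded $X$-norm but unbounded $L^\infty$-norm, i.e.\ precisely the regime in which the estimate \eqref{30} --- here reading $\|x\|_{L^\infty}\leq c\|x\|_{L^2}$ --- fails. This shows that \eqref{30} cannot be omitted in Proposition \ref{Thm SGES} ii).
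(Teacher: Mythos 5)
Your proof is correct and follows essentially the same strategy as the paper: reduce to the pointwise scalar ODE $\dot z=-\satr(z)$, then exhibit a family of unit-norm initial data, increasingly concentrated at large amplitude, whose solutions at any fixed time $t$ have $L^2$-norm tending to $1$, contradicting any uniform decay bound (and hence UGAS, semi-global exponential stability in $D(A)$, and ISS). The only difference is your choice of concentration family, $M\mathbf{1}_{[0,1/M^2]}$ instead of the paper's power functions $f_n(\xi)=n^{-1/2}\xi^{-\alpha_n}$, which makes the norm computation exact ($\|x^M(t)\|_X=1-t/M$) and avoids the paper's integral estimates and limit passage.
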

\begin{proof}
As System  \eqref{System L} is given by $\dot{x}(t)=-x(t)$, it is UGAS. System  \eqref{System SLD} 
s given by 
\begin{align}\label{System sat0}
\begin{cases}
\dot{x}(t,\xi) = -\satr\big(x(t,\xi)\big), \quad t\ge 0, \xi\in (0,1),\\
x(0,\xi) = f(\xi),
\end{cases}
\end{align}
with the unique mild solution $x\in C([0,\infty);L^2(0,1))$
\begin{align}\label{eq:solx}
x(t,\xi) = \begin{cases}
f(\xi) - t, \qquad &\text{if } f(\xi)\geq 1+t, \\
\e^{-t}f(\xi), \qquad &\text{if } f(\xi) \in (-1,1), \\
f(\xi) + t, \qquad &\text{if } f(\xi) \leq -1-t, \\
\e^{f(\xi)-1-t}, \qquad &\text{if } f(\xi)\in [1,1+t), \\
-\e^{1-t-f(\xi)}, \qquad &\text{if } f(\xi)\in(-1-t,-1],
\end{cases} 
\end{align}
which can be derived by solving \eqref{System sat0} for fixed $\xi$ as simple ODE.
We will show that there exists a sequence $(f_n)_n\in L^2(0,1)$  with $\|f_n\|_{D(A)} = \|f_n\|_{L^2(0,1)} = 1$ such that for all $t>0$ there exists an $n\in\N$ such that $\|x_n(t)\|_{L^2(0,1)}>\frac{1}{2}$ where $x_n$ denotes the corresponding solution of \eqref{System sat0} with initial function $f_n$. For this purpose we will only consider the restriction of $x_n$ to $\{\xi\in [0,1] \ | \ f(\xi)\geq 1+t\}$ and define
\begin{align*}
f_n(\xi) \coloneqq \frac{1}{\sqrt{n}}\xi^{-\alpha_n}
\end{align*}
with $\alpha_n\coloneqq \frac{1}{2}\left(1-\frac{1}{n}\right)$. Clearly, $f_n\in L^2(0,1)$, $\|f_{n}\|_{L^{2}}=1$ and $f_n$ is decreasing. 
Note that the equation $f_{n}(\xi)=1+t$ has a unique solution $\xi$ for fixed $n$ and $t$ which is given by
\begin{align*}
\xi=\xi_{t,n} := \frac{1}{(\sqrt{n}(1+t))^\frac{1}{\alpha_n}}.
\end{align*}
Therefore,
$\{\xi\in [0,1] \ | \ f_n(\xi)\geq 1+t\} = \{\xi\in [0,1] \ | \ \xi\leq \xi_{t,n}\}$.
Hence,
\begin{align*}
\|x_n(t)&\|_{L^2(0,1)}^2 \geq \int_0^{\xi_{t,n}} x_n(t,\xi)^2 \mathrm{d}\xi \\
&= \int_0^{\xi_{t,n}} \left(f_n(\xi)-t\right)^2 \mathrm{d}\xi \\
&= \int_0^{\xi_{t,n}} \left(\frac{1}{\sqrt{n}}\xi^{-\alpha_n}-t\right)^2 \mathrm{d}\xi \\
&= \frac{1}{n}\int_0^{\xi_{t,n}} \xi^{-2\alpha_n} \mathrm{d}\xi - \frac{2t}{\sqrt{n}}\int_0^{\xi_{t,n}} \xi^{-\alpha_n} \mathrm{d}\xi + \int_0^{\xi_{t,n}} t^2\mathrm{d}\xi \\
&= \frac{1}{n}\frac{1}{1-2\alpha_n}\xi_{t,n}^{1-2\alpha_n} - \frac{2t}{\sqrt{n}}\frac{1}{1-\alpha_n}\xi_{t,n}^{1-\alpha_n} + t^2\xi_{t,n} \\
&= n^{\frac{1}{1-n}}(1+t)^{\frac{2}{1-n}}-\frac{1}{n+1}2n^\frac{1}{1-n}2t(1+t)^{\frac{1+n}{1-n}}  + n^{\frac{n}{1-n}}t^2(1+t)^{\frac{2n}{1-n}}.
\end{align*}
Taking the limit $n\rightarrow \infty$ we conclude
\begin{align*}
\lim_{n\rightarrow\infty} \|x_n(t)\|_{L^2(0,1)}^2 \geq 1.
\end{align*}
Thus the solution of System  \eqref{System sat0} does not converge uniformly to $0$ with respect to the norm or graph norm of the initial value, so the system is neither semi-globally exponentially stable in $D(A)$ nor UGAS.
\hfill$\square$
\end{proof}

Note, that System \eqref{System SLD} from Proposition \ref{Thm counterexmp} is (GAS) for $d=0$ by \cite[Theorem 2]{MaAnPr17}.
After we have seen that \eqref{30} is necessary to conclude semi-global exponential stability in $D(A)$ in Proposition \ref{Thm SGES} ii), one may ask whether ``more stability'' can in fact be expected.
The following theorem shows that  
UGAS of System \eqref{System L} together with the hypotheses in Proposition \ref{Thm SGES} ii) are not sufficient to guarantee UGAS of System \eqref{System SLD}. 

\begin{theorem}\label{theo:sat}
Let $X=U=L^2(0,1)$, $B=I$, $S=L^\infty(0,1)$, $\sigma=\sat$ and
 \[A=\frac{d}{d\xi},\qquad D(A)=\{y\in H^1(0,1)\mid y(0)=y(1)\}.\]
Then the following assertions hold.
\begin{enumerate}
	\item System  \eqref{System L} is UGAS and the hypothesis of Proposition \ref{Thm SGES} ii) holds, 
	\item System \eqref{System SLD} is semi-globally exponentially stable in $D(A)$,
	\item System \eqref{System SLD} is neither UGAS nor ISS.
\end{enumerate}
\end{theorem}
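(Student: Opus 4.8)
The plan is to handle the three assertions in turn, engineering the argument so that (ii) reduces to a direct application of Proposition \ref{Thm SGES} and (iii) collapses onto the explicit computation already carried out in the proof of Proposition \ref{Thm counterexmp}.

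For (i), I would first record that with periodic boundary conditions $A=\frac{d}{d\xi}$ is \emph{skew-adjoint}: integration by parts yields $\langle Ay,z\rangle=-\langle y,Az\rangle$ for $y,z\in D(A)$, the boundary contributions cancelling because $y(0)=y(1)$ and $z(0)=z(1)$. Hence $A$ generates the periodic translation group $(T(t)f)(\xi)=f\big((\xi+t)\bmod 1\big)$, which is unitary, and $\wt{A}=A-I$ generates $\wt{T}(t)=\e^{-t}T(t)$ with $\|\wt{T}(t)\|=\e^{-t}$; by the exponential-stability characterization of UGAS for linear systems this gives UGAS of \eqref{System L}. To verify the hypotheses of Proposition \ref{Thm SGES} ii) I would simply take $P=\tfrac12 I$, which is bounded, self-adjoint and trivially maps $D(A)$ into $D(A)$; skew-adjointness gives $\langle \wt{A}x,Px\rangle+\langle Px,\wt{A}x\rangle=\Re\langle(A-I)x,x\rangle=-\|x\|^{2}$, so \eqref{prop1eq1} holds with equality. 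Condition \eqref{30}, with $B^{*}=I$ and $S=L^\infty(0,1)$, is exactly the one-dimensional Sobolev embedding $H^1(0,1)\hookrightarrow L^\infty(0,1)$. I would highlight that the sufficient condition $\Re\langle Ax,x\rangle\le-\alpha\|x\|^{2}$ from the discussion after Proposition \ref{Thm SGES} \emph{fails} here, since $\Re\langle Ax,x\rangle=0$, yet $P=\tfrac12 I$ still does the job.

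Assertion (ii) is then immediate: $\sat$ is a globally Lipschitz saturation function by Example \ref{Exmp sat} (as $\satr$ is $1$-Lipschitz on $\R$), and the remaining hypotheses of Proposition \ref{Thm SGES} ii) were checked in (i), so \eqref{System SLD} is semi-globally exponentially stable in $D(A)$. The substance lies in (iii), where the key point is that transport merely permutes the fibres and so leaves the $L^{2}$-norm untouched, reducing the question to Proposition \ref{Thm counterexmp}. Concretely, I would claim that for $d=0$ the mild solution is $x(t,\xi)=\Phi\big(t,f((\xi+t)\bmod1)\big)$, where $\Phi(t,u_{0})$ is the scalar flow of $\dot u=-\satr(u)$, $u(0)=u_{0}$, given explicitly in \eqref{eq:solx}. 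This is verified by inserting the formula into the mild-solution integral equation: since $(T(t-s)g)(\xi)=g\big((\xi+t-s)\bmod1\big)$, the argument of $f$ remains $(\xi+t)\bmod1$ for every $s$, and the identity collapses fibrewise to the integrated ODE $\Phi(t,u_{0})=u_{0}-\int_0^t\satr(\Phi(s,u_{0}))\,\mathrm{d}s$. Because $\eta=(\xi+t)\bmod1$ is measure-preserving, $\|x(t)\|_{L^2}^2=\int_0^1|\Phi(t,f(\eta))|^2\,\mathrm{d}\eta$, which is precisely the quantity estimated in Proposition \ref{Thm counterexmp}. Feeding in the same sequence $f_n(\xi)=\tfrac{1}{\sqrt n}\xi^{-\alpha_n}$, $\alpha_n=\tfrac12(1-\tfrac1n)$, with $\|f_n\|_{L^2}=1$ (note $f_n\notin D(A)$, so there is no clash with (ii)), yields $\liminf_n\|x_n(t)\|_{L^2}^2\ge1$ for each fixed $t$; hence no $\KL$-bound on $\|x(t)\|$ in terms of $\|x_0\|$ can exist and \eqref{System SLD} is not UGAS. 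Finally, putting $d\equiv0$ in \eqref{ISS} (with $\rho(0)=0$) would force UGAS, so the failure of UGAS also excludes ISS.

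The step I expect to be the main obstacle is the clean justification of the characteristic representation of the mild solution for all $f\in L^2$, rather than only for smooth periodic data where the non-differentiability of $\satr$ at $\pm1$ could be a nuisance; the fibrewise reduction to the integrated scalar ODE is what makes this go through without a regularity detour, using only that $u_0\mapsto\Phi(t,u_0)$ is Lipschitz and nonexpansive so that $\Phi(t,f(\cdot+t))\in L^2$. Once that representation and the norm identity $\|x(t)\|_{L^2}^2=\int_0^1|\Phi(t,f)|^2$ are in place, the remainder is either a direct citation of Proposition \ref{Thm SGES}/\ref{Thm counterexmp} or the elementary skew-adjointness and Sobolev facts from (i).
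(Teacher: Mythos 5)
Your proposal is correct and follows essentially the same route as the paper's proof: dissipativity (skew-adjointness) of $A$ plus a scalar multiple of the identity for $P$ and the Sobolev embedding $H^1(0,1)\hookrightarrow L^\infty(0,1)$ for (i), a direct citation of Proposition \ref{Thm SGES} ii) for (ii), and for (iii) the representation of the mild solution as the periodic shift composed with the scalar flow \eqref{eq:solx}, the resulting $L^2$-norm equality, and the sequence $(f_n)_n$ from Proposition \ref{Thm counterexmp}. The only differences are cosmetic: you take $P=\tfrac12 I$ where the paper takes $P=I$, and you spell out the fibrewise verification of the characteristic formula that the paper merely asserts.
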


We note, that System \eqref{System SLD} of Theorem \ref{theo:sat} equals \eqref{System sat}. Further, in \cite[Thm.~1]{MaChPr18} it has been wrongly stated that the saturated system is UGAS. 
\begin{proof}
It is easy to see that System  \eqref{System L} is UGAS. Since $A$ is dissipative, it follows that $P=I$ solves \eqref{prop1eq1} for $\tilde{A}=A-BB^*=A-I$. Trivially, $P$ maps $D(A)$ to $D(A)$. Condition \eqref{30} is satisfied because $H^1(0,1)$ is continuously embedded in $L^\infty(0,1)$. Hence, \eqref{System SLD} is semi-globally exponentially stable in $D(A)$ by Proposition \ref{Thm SGES} and the fact that $\sigma$ is globally Lipschitz continuous. This shows Assertions i) and ii). To see iii), note that $A$ generates the periodic shift semigroup on $L^2(0,1)$. By extending the initial function $f$ periodically to $\R_+$, the unique mild solution $y\in C([0,\infty);L^2(0,1))$ of \eqref{System SLD} is given by 
\begin{align*}
y(t,\xi) = x(t,\xi+t),
\end{align*} 
where $x$ is defined in \eqref{eq:solx}. By the particular form of \eqref{eq:solx}, this implies that 
\begin{align*}
\|x(t)\|_{L^2(0,1)} = \|y(t)\|_{L^2(0,1)}
\end{align*}
holds for all $t\geq0$. We can therefore choose the same sequence $(f_n)_n\in L^2(0,1)$ with $\|f_n\|_{L^2(0,1)}=1$ as in the proof of Proposition \ref{Thm counterexmp} in order to conclude
\begin{align*}
\lim_{n\rightarrow\infty} \|y_n(t)\|_{L^2(0,1)}^2 \geq 1.
\end{align*}
This shows that System  \eqref{System SLD} is not UGAS and thus not ISS.
\hfill$\square$
\end{proof}

An important tool for the verification of ISS of System \eqref{System SLD} are ISS Lyapunov functions.

\begin{definition}
Let $U_{r}=\{x\in X\colon \|x\|\leq r\}$ and $r\in(0,\infty]$. Let $\mathcal{U}$ be either $C(0,\infty;U)$ or $L_{{\rm loc}}^{\infty}(0,\infty;U)$. A continuous function $V:U_{r}\rightarrow\R_{\geq0}$ is called an {\em LISS Lyapunov function} for \eqref{System SLD} with respect to $\mathcal{U}$, if there exists $\psi_1,\psi_2,\alpha,\rho\in\K_\infty$, such that for all $x_0\in U_{r}$, $d\in \mathcal{U}$, $\|d\|_{L^\infty(0,\infty;U)}\le r$,
\begin{align*}
\psi_1(\|x_0\|_X) \leq V(x_0) \leq \psi_2(\|x_0\|_X)
\end{align*}
and
\begin{align}\label{24}
\dot{V}_d(x_0):=\limsup_{t\searrow0}\frac{1}{t}\big(V(x(t))-V(x_0)\big)\leq-\alpha(\|x_0\|_X)+\rho(\|d\|_{L^\infty(0,\infty;U)}),
\end{align}
where $x$ is the mild solution of \eqref{System SLD} with initial value $x_0$ and disturbance $d$. If $r=\infty$, then $V$ is called an {\em ISS Lyapunov function}.
\end{definition}

Note that our definition of an ISS Lyapunov function corresponds to the one of a ``coercive ISS Lyapunov function in dissipative form'' in the literature, \cite{MiroPrie19}.
By  \cite[Thm.~1]{DaM13}, see also \cite[Thm.~2.18]{MiroPrie19}, the existence of an (L)ISS Lyapunov implies (L)ISS for a large class of control systems which, in particular have to satisfy the ``boundedness-implies-continuation'' property (BIC). System \eqref{System SLD} with an admissible feedback function and continuous, or, more generally, piecewise continuous disturbances $d$ belongs to this class, which allows to infer (L)ISS from the existence of a Lyapunov function. To see this, note in particular that the (BIC) property is satisfied by classical results on semilinear equations, \cite[Prop.~4.3.3]{cazenaveharaux98} or \cite[Thm.~6.1.4]{pazy83}.\newline
In the following we will infer ISS by constructing Lyapunov functions. 

\begin{theorem}\label{thm:UGAS} Suppose that there exists $\alpha>0$ such that $\|T(t)\|\leq \mathrm{e}^{-\alpha t}$ for all $t>0$
and let $\sigma$ be an admissible feedback function. Then the function
\begin{align*}
V(x) = \|x\|_X^2,\quad x\in X,
\end{align*}
is an ISS Lyapunov function for \eqref{System SLD} with respect to $C(0,\infty;U)$ and System \eqref{System SLD} is ISS with respect to $C(0,\infty;U)$.
\end{theorem}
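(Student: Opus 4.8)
The plan is to verify directly that $V(x)=\|x\|_X^2$ satisfies the three requirements in the definition of an ISS Lyapunov function with respect to $C(0,\infty;U)$, and then to invoke the implication ``(coercive) ISS Lyapunov function $\Rightarrow$ ISS'' from \cite{DaM13,MiroPrie19}, whose applicability to \eqref{System SLD} (the (BIC) property for admissible feedback functions and continuous disturbances) was recalled above. The sandwich bounds $\psi_1(\|x_0\|_X)\le V(x_0)\le\psi_2(\|x_0\|_X)$ hold trivially with $\psi_1(s)=\psi_2(s)=s^2\in\K_\infty$, so the entire content is the dissipation estimate \eqref{24} for the Dini derivative $\dot V_d(x_0)=\limsup_{t\searrow0}\frac1t\big(\|x(t)\|_X^2-\|x_0\|_X^2\big)$.

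The main obstacle is that the mild solution $x$ need not take values in $D(A)$, so $\|x(t)\|_X^2$ cannot be differentiated along the flow; instead I would compute the $\limsup$ directly from the variation-of-constants formula. Writing $f(s):=B\sigma(B^*x(s)+d(s))$, which is continuous in $s$ because $x$, $d$ and $\sigma$ are continuous and $B,B^*$ are bounded, expanding the inner product gives
\begin{align*}
\|x(t)\|_X^2-\|x_0\|_X^2 = \big(\|T(t)x_0\|_X^2-\|x_0\|_X^2\big) - 2\Re\Big\langle T(t)x_0,\int_0^t T(t-s)f(s)\,\mathrm{d}s\Big\rangle + \Big\|\int_0^t T(t-s)f(s)\,\mathrm{d}s\Big\|_X^2.
\end{align*}
Dividing by $t$ and letting $t\searrow0$, the first term is controlled by the standing hypothesis via $\|T(t)x_0\|_X^2\le \e^{-2\alpha t}\|x_0\|_X^2$, so its $\limsup$ is at most $-2\alpha\|x_0\|_X^2$; the last term is $O(t^2)$ since $\|T(t-s)\|\le1$ and $f$ is bounded near $0$, hence it drops out; and for the cross term the standard limit $\frac1t\int_0^tT(t-s)f(s)\,\mathrm{d}s\to f(0)$ (which uses strong continuity of $\T$ together with continuity of $f$, and is exactly where the restriction to $d\in C(0,\infty;U)$ enters) yields the value $-2\Re\langle x_0,f(0)\rangle=-2\Re\langle B^*x_0,\sigma(B^*x_0+d(0))\rangle_U$. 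Altogether
\begin{align*}
\dot V_d(x_0)\le -2\alpha\|x_0\|_X^2 - 2\Re\langle B^*x_0,\sigma(B^*x_0+d(0))\rangle_U.
\end{align*}

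It remains to estimate the nonlinear term using only the admissibility of $\sigma$. Maximal monotonicity applied to the pair $B^*x_0+d(0)$ and $d(0)$ gives $\Re\langle B^*x_0,\sigma(B^*x_0+d(0))-\sigma(d(0))\rangle_U\ge0$, whence
\begin{align*}
-\Re\langle B^*x_0,\sigma(B^*x_0+d(0))\rangle_U \le -\Re\langle B^*x_0,\sigma(d(0))\rangle_U \le \|B\|\,\|x_0\|_X\,\|\sigma(d(0))\|_U.
\end{align*}
Setting $\gamma(s):=\sup_{\|w\|_U\le s}\|\sigma(w)\|_U$ — which is finite for every $s$ and non-decreasing because $\sigma$ is locally Lipschitz with $\sigma(0)=0$ — and using $\|d(0)\|_U\le\|d\|_{L^\infty(0,\infty;U)}$, Young's inequality $2ab\le \alpha a^2+\alpha^{-1}b^2$ then delivers
\begin{align*}
\dot V_d(x_0)\le -\alpha\|x_0\|_X^2 + \frac{\|B\|^2}{\alpha}\,\gamma\big(\|d\|_{L^\infty(0,\infty;U)}\big)^2.
\end{align*}

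Finally I would dominate the non-decreasing (and possibly discontinuous or bounded, since $\sigma$ is only locally Lipschitz) function $s\mapsto \frac{\|B\|^2}{\alpha}\gamma(s)^2$ by a genuine $\rho\in\K_\infty$, for instance $\rho(s)=s+\int_1^2\frac{\|B\|^2}{\alpha}\gamma(su)^2\,\mathrm{d}u$, which is strictly increasing, vanishes at $0$, is unbounded, and dominates the bound because $\gamma(su)\ge\gamma(s)$ for $u\ge1$. Then \eqref{24} holds with $\alpha$-function $s\mapsto\alpha s^2\in\K_\infty$, so $V$ is an ISS Lyapunov function for \eqref{System SLD} with respect to $C(0,\infty;U)$, and ISS with respect to $C(0,\infty;U)$ follows from \cite{DaM13,MiroPrie19}. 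I expect the delicate point to be the rigorous justification of the three limits for the merely mild solution, in particular the interchange underlying $\frac1t\int_0^tT(t-s)f(s)\,\mathrm{d}s\to f(0)$, which forces the continuity assumption on the disturbance.
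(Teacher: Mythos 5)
Your proposal is correct, but it takes a genuinely different route from the paper. The paper never expands $\|x(t)\|_X^2$ via the variation-of-constants formula; instead it first proves (for $x_0\in D(A)$ and Lipschitz-continuous $d$) that the mild solution stays in $D(A)$ and is right-differentiable --- via a Gronwall continuous-dependence estimate and a Favard-space/reflexivity argument --- then computes the Dini derivative $D^{+}V(x(\cdot))(t)=2\Re(\langle Ax(t),x(t)\rangle-\langle B\sigma(B^*x(t)+d(t)),x(t)\rangle)$ pointwise along the trajectory, integrates, and finally extends to all $x_0\in X$ and continuous $d$ by density. Your direct expansion into the three terms $\|T(t)x_0\|^2-\|x_0\|^2$, the cross term, and the $O(t^2)$ remainder avoids all of this regularity bootstrapping and uses the hypothesis $\|T(t)\|\le\e^{-\alpha t}$ in its semigroup form rather than through the equivalent dissipativity inequality $\Re\langle Ax,x\rangle\le-\alpha\|x\|^2$. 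The second genuine difference is the treatment of the nonlinear term: the paper splits off $\sigma(B^*x(t))$, using monotonicity for $-\Re\langle\sigma(B^*x(t)),B^*x(t)\rangle\le0$ and the \emph{local} Lipschitz property for the difference, which produces the bound $k_r\|d(0)\|\,\|B\|\,\|x_0\|$ with $k_r$ depending on a radius $r$ that involves $\|x_0\|$; you instead apply monotonicity to the pair $(B^*x_0+d(0),\,d(0))$, yielding $\|B\|\,\|x_0\|\,\|\sigma(d(0))\|_U$, whose disturbance factor $\gamma(\|d\|_{L^\infty})$ depends only on $d$. This is arguably cleaner for producing the required gain $\rho\in\K_\infty$ independent of the initial state (your explicit $\K_\infty$-domination of the possibly discontinuous, possibly bounded $\gamma^2$ is a nice touch the paper does not need to address, since its Lipschitz-constant-based bound is already continuous in $\|d\|$); conversely, the paper's longer route yields by-products of independent interest --- the $D(A)$-regularity and right-differentiability of mild solutions and a trajectory-wise integral inequality --- that your argument, which only evaluates the Dini derivative at $t=0$, does not provide.
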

\begin{proof}
Let $x\in C(0,t_1;X)$ be the mild solution of \eqref{System SLD} with initial value $x_0\in D(A)$ and disturbance $d\in C(0,\infty;U)$. Let $y\in C(0,t_2;X)$ be the mild solution  of the system
\begin{align*}
\begin{cases}
\dot{y}(t) = Ay(t)-B\sigma\big(B^*y(t)+\tilde{d}(t)\big)\\
y(0)  = y_0
\end{cases}
\end{align*}
with $\tilde{d}\in C(0,\infty;U)$ and $y_0\in X$. Then there exists an $r>0$ such that 
\[\max\{\|B^*x(s)+d(s)\|_U,\|B^*y(s)+\tilde{d}(s)\|_U,\|B^*x(s)\|_U\mid s\in [0,\min\{t_1,t_2\}]\}<r\]
 because $x$, $y$, $d$ and $\tilde{d}$ are continuous. Thus we have for $t\in [0,\min\{t_1,t_2\})$
\begin{align*}
\|x(t)-y(t)\| &\leq \|x_0-y_0\|  + \int_0^t\|B\|k_r\big(\|B\|\|x(s)-y(s)\| + \|d(s)-\tilde{d}(s)\|\big) \,\mathrm{d}s.
\end{align*}
Applying Gronwall's inequality yields
\begin{align}\label{eq:Gronwall}
\|x(t)-y(t)\| \leq \left(\|x_0-y_0\| + \int_0^t\|B\|k_r\|d(s)-\tilde{d}(s)\|\,\mathrm{d}s\right)e^{t\|B\|^2k_r}.
\end{align}
Let us for a moment  assume that $d$ ist Lipschitz continuous with Lipschitz constant $L$. We will prove that $x$ is right-differentiable. For $0<h<t_1-t$ we can write $x(t+h)$ in the form
\begin{align*}
x(t+h) &= T(t+h)x_0-\int_0^{t+h}T(t+h-s)B\sigma\big(B^*x(s)+d(s)\big)\,\mathrm{d}s\\
&= T(t)x(h) - \int_0^tT(t-s)B\sigma\big(B^*x(s+h)+d(s+h)\big)\,\mathrm{d}s.
\end{align*}
Thus $x$ at time $t+h$ equals the mild solution $y$ of
\begin{align}\label{eq:systemy}
\begin{cases}
\dot{y}(t) = Ay(t)-B\sigma\big(B^*y(t)+d(t+h)\big)\\
y(0) = x(h)
\end{cases}
\end{align}
at time $t$. Hence, by \eqref{eq:Gronwall} we obtain
\begin{equation}\label{eq10}
\|x(t+h)-x(t)\| \leq \big(\|x(h)-x_0\|+\|B\|k_rLht\big)e^{t\|B\|^2k_r}.
\end{equation}
Note that
\begin{align*}
\frac{x(h)-x_0}{h} = \frac{T(h)x_0-x_0}{h}-\frac{1}{h}\int_0^hT(h-s)B\sigma\big(B^*x(s)+d(s)\big)\,\mathrm{d}s
\end{align*}
converges to $Ax_0-B\sigma\big(B^*x_0+d(0)\big)$ as $h\searrow0$ since $x_0\in D(A)$ and $\sigma$, $x$ and $d$ are continuous. Therefore, by \eqref{eq10}, we deduce
\begin{align}\label{eq:bdd}
\limsup_{h\searrow 0} \frac{\|x(t+h)-x(t)\|}{h} < \infty.
\end{align}
By the definition of the mild solution we have that
\begin{align*}
\frac{T(h)-I}{h}x(t) = \frac{x(t+h)-x(t)}{h}+ \frac{1}{h}\int_t^{t+h}T(t+h-s)B\sigma\big(B^*x(s)+d(s)\big)\mathrm{d}s.
\end{align*}
Again by continuity of  $\sigma$, $x$ and $d$ we have that 
\begin{align*}
\lim_{h\searrow0}\frac{1}{h}\int_t^{t+h}T(t+h-s)B\sigma\big(B^*x(s)+d(s)\big)\,\mathrm{d}s=B\sigma\big(B^*x(t)+d(t)\big).
\end{align*}
Combining this with \eqref{eq:bdd} shows that \[x(t)\in \{z\in X \:|\:\limsup_{h\searrow0}\frac{1}{h}\|T(h)x-x\|<\infty\},\] which means that 
 $x(t)$ is an element of the Favard space of the semigroup and because $X$ is reflexive, we can conclude that $x(t)\in D(A)$, \cite[Cor.~II.5.21]{engelnagel99}. 
 This implies that $x$ is right-differentiable at $t$ with
\begin{align*}
\lim_{h\searrow 0} \frac{x(t+h)-x(t)}{h} = Ax(t)-B\sigma\big(B^*x(t)+d(t)\big).
\end{align*}
As $V(x) = \|x\|^2$, we hence obtain for the Dini derivative \[D^{+}V(x(\cdot))(t)=\limsup_{h\searrow 0}\frac{1}{t}\big(V(x(t+h))-V(x(t))\big)\] that
\begin{align}
 D^{+}V(x(\cdot))(t)&= 2\Re(\langle Ax(t),x(t)\rangle_X - \langle B\sigma(B^*x(t)+d(t)),x(t)\rangle_X) \notag\\
&\leq -2\alpha\|x(t)\|^2 - \Re(\langle \sigma(B^*x(t)+d(t))-\sigma(B^*x(t)),B^{*}x(t)\rangle_X )\notag\\
&\leq -2\alpha\|x(t)\|^2 + \| \sigma(B^*x(t)+d(t))-\sigma(B^*x(t))\| \, \|B^{*}x(t)\|\notag\\
&\leq -2\alpha\|x(t)\|^2 + k_r\|d(t)\|\,\|B\|\,\|x(t)\|,\label{eq:crucial}
\end{align}
where we used that $-\Re\langle \sigma(B^{*}x),B^{*}x\rangle\leq 0$ by Property (i) and (ii) of admissible feed\-back functions and the local Lipschitz condition for $\sigma$. 
By \cite[Cor.~A.5.45]{Curtain:2717229} we obtain
\begin{align}\label{eq:CZ}
V(x(t+h))-V(x(t)) \leq \int_t^{t+h}-2\alpha\|x(s)\|^2 + k_r\|d(s)\|\,\|B\|\,\|x(s)\|\,\mathrm{d}s.
\end{align}
From \eqref{eq:Gronwall} we derive
\begin{align*}
\|x(t)-y(t)\| \leq \big(\|x_0-y_0\| + t\|B\|k_r\|d-\tilde{d}\|_{L^{\infty}(0,t;U)}\big)e^{t\|B\|^2k_r}
\end{align*}
and therefore the mild solution of \eqref{SLD} depends continuously on the initial data and the disturbance. Hence, by understanding $x(t+h)$ again as the solution of \eqref{eq:systemy} at time $t$, \eqref{eq:CZ} holds for all $x_0\in X$ and $d\in C(0,\infty;U)$ which leads to
\begin{align*}
\dot{V}_{d}(x_{0})&\leq -2\alpha\|x_{0}\|^2 + k_r\|d(0)\|\,\|B\|\,\|x_{0}\| \\
&\leq (\eps-2\alpha)\|x_{0}\|^2 + \frac{(k_r\,\|B\|\,\|d(0)\|)^2}{\eps}
\end{align*}
for all $x_{0}\in X$, $d\in C(0,\infty;U)$ and $\eps>0$. 
Choosing $\eps<2\alpha$, this shows that $V$ is an ISS-Lyapunov function for \eqref{System SLD} which implies that \eqref{System SLD} is ISS by \cite[Thm.~2.18]{MiroPrie19}.
\hfill$\square$
\end{proof}

\begin{remark}\-\label{rem:thm}
\begin{enumerate}
\item Recall that the semigroup generated by $A$ in Theorem \ref{theo:sat} was not exponentially stable. Theorem \ref{thm:UGAS}  shows that this is not accidental. 
\item \label{rem:thm2}
Note that the assumption on the semigroup made in Theorem \ref{thm:UGAS} is strictly stronger than the condition that $(T(t))_{t\geq0}$ is an exponentially stable contraction semigroup as can be seen e.g.\ for a nilpotent shift-semigroup on $X=L^{2}(0,1)$.
It is a simple consequence of the Lumer–Phillips theorem that the following assertions are equivalent for a semigroup $(T(t))_{t\geq0}$ generated by $A$ and some constant $\omega>0$.
\smallskip
\begin{enumerate}
\item $\Re\langle Ax,x\rangle\leq -\omega\|x\|^{2}$ all $x\in D(A)$.
\item $\sup_{t>0}\|e^{\omega t}T(t)\|\leq 1$.
\item $P=\frac{1}{\omega} I$ solves  $\Re\langle Ax,Px\rangle \leq-\langle x,x\rangle$, for all $x\in D(A)$.
\end{enumerate}
\smallskip
However, we also remark that the above condition is satisfied for a large class of examples, such as in the case when $A$ is a normal operator. 
\item It is natural to ask whether Theorem \ref{thm:UGAS} holds when $A$ is merely assumed to generate an exponentially stable semigroup. 
However, it is unclear how to use the structural assumptions on $\sigma$ in the general case. 
On the other hand, the assumption on the semigroup in Theorem \ref{thm:UGAS} implies that $P=I$ satisfies \eqref{prop1eq1} in Proposition \ref{Thm SGES} ii).
\item An inspection of the proof shows that Theorem \ref{thm:UGAS} can be generalized to piecewise continuous or regulated functions $d:[0,\infty)\to U$. 
\end{enumerate}
\end{remark}

Locally linear admissible feedback functions yield LISS Lyapunov functions.

\begin{theorem}\label{Thm_LISS}
Let \eqref{System L} be UGAS with $M,\omega>0$ such that $\|\wt{T}(t)\|\leq M\e^{-\omega t}$ for all $t\geq0$ and  let $\sigma$ be an admissible feedback function with $\sigma(u) = u$ for all $\|u\|_U\leq\delta$ and some $\delta>0$. Then \eqref{System SLD} is LISS with  Lipschitz continuous LISS Lyapunov function $V(x)\coloneqq \max_{s\geq0}\|\e^{\frac{\omega}{2} s}\wt{T}(s)x\|_X$.
\end{theorem}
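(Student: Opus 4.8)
The plan is to verify that the proposed $V$ is a (globally Lipschitz) LISS Lyapunov function on a sufficiently small ball $U_r$ and then to conclude LISS of \eqref{System SLD} from \cite[Thm.~2.18]{MiroPrie19}, using the (BIC) property already discussed for admissible feedback functions. The conceptual point is that $V$ is a Datko-type Lyapunov function for the exponentially stable linear semigroup $\wt{T}$, and that near the origin the saturation is invisible, so that the saturated trajectory is driven by the linear flow perturbed additively by $-Bd$.

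First I would record the elementary properties of $V$. Since $\|\wt{T}(s)\|\le M\e^{-\omega s}$, the map $s\mapsto \|\e^{\omega s/2}\wt{T}(s)x\|_X$ is continuous, bounded above by $M\e^{-\omega s/2}\|x\|_X\to 0$, and equals $\|x\|_X$ at $s=0$; hence the maximum is attained and $\|x\|_X\le V(x)\le M\|x\|_X$, giving the coercivity bounds with $\psi_1(s)=s$ and $\psi_2(s)=Ms$ in $\K_\infty$. Moreover, since $|\max_s f(s)-\max_s g(s)|\le \max_s|f(s)-g(s)|$, one obtains $|V(x)-V(y)|\le V(x-y)\le M\|x-y\|_X$, so $V$ is globally Lipschitz with constant $M$. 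The decay of $V$ along the linear flow follows from the semigroup property: substituting $\tau=s+h$,
\[V(\wt{T}(h)x)=\max_{s\ge0}\|\e^{\omega s/2}\wt{T}(s+h)x\|_X=\e^{-\omega h/2}\max_{\tau\ge h}\|\e^{\omega\tau/2}\wt{T}(\tau)x\|_X\le \e^{-\omega h/2}V(x).\]

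Next I would exploit local linearity. I choose $r>0$ with $(\|B^*\|+1)r<\delta$. For $\|x_0\|_X\le r$ and $\|d\|_{L^\infty(0,\infty;U)}\le r$, continuity of the mild solution $x$ with $x(0)=x_0$ yields a $t_0>0$ with $\|B^*x(s)+d(s)\|_U<\delta$ for a.e.\ $s\in[0,t_0]$, so that $\sigma(B^*x(s)+d(s))=B^*x(s)+d(s)$ there. Inserting this into the defining integral equation gives $x(t)=T(t)x_0-\int_0^t T(t-s)(BB^*x(s)+Bd(s))\,\mathrm{d}s$ on $[0,t_0]$; comparing with the variation-of-constants representation of the mild solution of $\dot z=\wt{A}z-Bd$, $z(0)=x_0$, both functions satisfy the same $T$-integral equation, so a Gronwall argument (as in \eqref{eq:Gronwall}) forces
\[x(t)=\wt{T}(t)x_0-\int_0^t\wt{T}(t-s)Bd(s)\,\mathrm{d}s,\qquad t\in[0,t_0].\]
Assembling the dissipation inequality, the Lipschitz bound on $V$ and $\|\wt{T}(t-s)\|\le M$ give
\[V(x(t))\le V(\wt{T}(t)x_0)+M\Big\|\int_0^t\wt{T}(t-s)Bd(s)\,\mathrm{d}s\Big\|_X\le \e^{-\omega t/2}V(x_0)+M^2\|B\|\,\|d\|_{L^\infty(0,\infty;U)}\,t.\]
Subtracting $V(x_0)$, dividing by $t$ and letting $t\searrow0$ yields $\dot{V}_d(x_0)\le-\tfrac{\omega}{2}V(x_0)+M^2\|B\|\,\|d\|_{L^\infty(0,\infty;U)}\le-\tfrac{\omega}{2}\|x_0\|_X+M^2\|B\|\,\|d\|_{L^\infty(0,\infty;U)}$, which is the required inequality with $\alpha(s)=\tfrac{\omega}{2}s$ and $\rho(s)=M^2\|B\|s$ in $\K_\infty$. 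Hence $V$ is a LISS Lyapunov function on $U_r$, and LISS follows.

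The main obstacle I expect is the bookkeeping in the local-linearity step: fixing $r$ (and then a time $t_0$) so that $\|B^*x(s)+d(s)\|_U<\delta$ is guaranteed along the saturated trajectory for small $s$, and rigorously replacing the $A$-integral equation by the $\wt{A}$-one via uniqueness, so that the clean linear estimate above is genuinely available. Everything else---coercivity, Lipschitz continuity, and the $\e^{-\omega h/2}$ contraction of $V$ under $\wt{T}$---is a direct consequence of exponential stability of $\wt{T}$.
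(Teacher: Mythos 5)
Your proposal is correct, and it follows the paper's overall architecture: the same Lyapunov function $V$, the same three elementary properties (coercivity $\|x\|\leq V(x)\leq M\|x\|$, Lipschitz continuity with constant $M$, and the decay $V(\wt{T}(h)x)\leq \e^{-\omega h/2}V(x)$), the same decomposition of $x(t)$ into $\wt{T}(t)x_0$ plus an integral remainder, and the same conclusion via a Lyapunov theorem for LISS. The difference lies in how the remainder is handled. The paper keeps the nonlinearity: it writes the remainder as $\int_0^h\wt{T}(h-s)B\big(B^*x(s)-\sigma(B^*x(s)+d(s))\big)\,\mathrm{d}s$, splits it into $B^*x(s)-\sigma(B^*x(s))$ (which vanishes in the limit $h\searrow 0$ because $\sigma$ is linear at the single point $B^*x_0$ with $\|x_0\|\leq\|B\|^{-1}\delta$) and $\sigma(B^*x(s))-\sigma(B^*x(s)+d(s))$ (estimated by the local Lipschitz constant $k_r$ of $\sigma$); this yields the gain $M^2\|B\|k_r\|d\|_{L^\infty}$ and, notably, requires no smallness of $d$ for the dissipation inequality itself. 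You instead impose $(\|B^*\|+1)r<\delta$ so that the \emph{entire argument} of $\sigma$ stays in the linear region for a short time, identify the trajectory with the mild solution of the genuinely linear system $\dot z=\wt{A}z-Bd$ via a uniqueness/Gronwall argument, and estimate the explicit variation-of-constants formula. Your route buys a cleaner gain $M^2\|B\|\,\|d\|_{L^\infty}$ (no $k_r$) and makes transparent that Lipschitz continuity of $\sigma$ enters only through well-posedness of the mild solution, not through the estimate; what it costs is the extra (standard, but not free) lemma that the $\wt{T}$-representation and the $T$-representation of the perturbed linear equation coincide, and the restriction that the disturbance be small --- harmless here since LISS only asks for some $r>0$, but it means your dissipation inequality, unlike the paper's, does not extend to large $d$. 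Both proofs, incidentally, avoid using the monotonicity property (iii) of admissible feedback functions, as the paper remarks after its proof.
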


\begin{proof}
Let $\|x_0\|_X\leq\|B\|^{-1}\delta$ and $r\coloneqq \max\{\|B^*x(s)\|_U,\|B^*x(s)+d(s)\|_U \mid s\in[0,t]\}$ for some $t>0$.
We can rewrite (\ref{System SLD}) in the form
\begin{align*}
\begin{cases}
\dot{x}(t) = \wt{A}x(t)+B\big(B^*x(t)-\sigma(B^*x(t)+d(t))\big), \\
x(0) = x_0.
\end{cases}
\end{align*}
Hence, the mild solution satisfies
\begin{align*}
x(h) = \wt{T}(h)x_0 + \int_0^h\wt{T}(h-s)B\big(B^*x(s)-\sigma(B^*x(s)+d(s)\big)\mathrm{d}s.
\end{align*}
Denoting the integral by $I_{h}$, we have
\begin{align*}
\limsup_{h\searrow0}\frac{1}{h}\|I_h\|_X &\leq \limsup_{h\searrow0}\frac{1}{h}\bigg(\int_0^hM\|B\|\|B^*x(s)-\sigma(B^*x(s))\|_U\mathrm{d}s\\
& \qquad \qquad \  + \int_0^hM\|B\|\|\sigma(B^*x(s))-\sigma(B^*x(s)+d(s))\|_U\mathrm{d}s\bigg)\\
&\leq \ M\|B\|\|B^*x_0-\sigma(B^*x_0)\|_U + M\|B\|k_r\|d\|_{L^\infty(0,\varepsilon;U)} \\
&= \ M\|B\|k_r\|d\|_{L^\infty(0,\varepsilon;U)},
\end{align*}
where the continuity of $x$, the Lipschitz continuity of $\sigma$ as well as the condition $\sigma(u)=u$ if $\|u\|\le \delta$ have been used. \\
With $\|x\|\leq V(x)\leq M\|x\|$ and $V\big(\wt{T}(t)x\big) \leq \e^{-\frac{\omega}{2} t}V(x)$ for all $x\in X$ we obtain
\begin{align*}
\dot{V}_d(x_0) &= \limsup_{h\searrow0}\frac{1}{h}\big(V(\wt{T}(h)x_0+I_h)-V(x_0)\big) \\
&\leq \limsup_{h\searrow0}\frac{1}{h}\left(\e^{-\frac{\omega}{2} h}-1\right)V(x_0) + M\limsup_{h\searrow0}\frac{1}{h}\|I_h\|_X \\
&\leq -\frac{\omega}{2} \|x_0\|_X + M^2\|B\|k_r\|d\|_{L^\infty(0,\varepsilon;U)}
\end{align*} 
for every $\varepsilon>0$.
The Lipschitz continuity of $V$ follows from
\begin{align*}
|V(x)-V(y)| &\leq |\max_{s\geq0}\|\e^{\frac{\omega}{2} s}\wt{T}(s)x\|- \max_{s\geq0}\|\e^{\frac{\omega}{2} s}\wt{T}(s)y\|| \\
&\leq \max_{s\geq0}\|\e^{\frac{\omega}{2} s}\wt{T}(s)(x-y)\| \\
&\leq M\|x-y\|,
\end{align*}
for all $x,y\in X$.
Applying \cite[Theorem 4]{Mi16} yields local input-to-state stability of \eqref{System SLD}. \hfill$\square$
\end{proof}

Note that Property iii) of Definition \ref{def:admfeedback} has not been used in the proof of Theorem \ref{Thm_LISS}. 

\section{Conclusion}
In this note we have continued the study of ISS for saturated feedback connections of linear systems. Theorem \ref{theo:sat} states that ISS cannot be concluded from uniform exponential stability of the unsaturated closed-loop and stability of the (undisturbed) open-loop linear system
\[\dot{x}(t)=Ax(t)\]
(i.e.\ the semigroup generated by $A$ is bounded). However, the conclusion does hold under more assumptions on $A$; namely that 
$\Re\langle Ax,x\rangle \leq -\alpha\|x\|^{2}$ for some $\alpha>0$ and all $x\in D(A)$, see Theorem \ref{thm:UGAS}. The latter property can be seen as some kind of quasi-contractivtiy of the semigroup combined with exponential stability. This condition seems to be crucial for the proof, see Remark \ref{rem:thm}. The question remains whether the result could be generalized to more general semigroups, e.g.\ such as contractive semigroups which are exponentially stable, but do not satisfy the above mentioned quasi-contractivity. Note, however, that the assumption that $A$ generates a contraction semigroup  seems to be to essential to employ dissipativity of the nonlinear system.\\
Another task for future research is the step towards unbounded operators $B$, prominently appearing in boundary control systems. As our techniques and also the ones used in existing results for ISS on saturated systems, seem to heavily rely on the boundedness of $B$, this may require a different approach or more structural assumptions on $A$.

\section*{Ackowledgements}
The authors thank Hans Zwart for fruitful discussions on the proof of Proposition \ref{Thm counterexmp} during a visit of the third author at the University of Twente.

\end{document}